\documentclass[11pt]{article}
\usepackage[letterpaper,hmargin=1.5in,vmargin=1.75in]{geometry}

\usepackage{graphicx}
\usepackage{amsmath}
\usepackage{amsfonts}
\usepackage{amssymb}
\usepackage{latexsym}
\usepackage{amsthm}

\newtheorem{Prop}{Proposition}
\newtheorem{Clm}{Claim}

\begin{document}

\title{Strong pseudoprimes to the first 9 prime bases}
\author{ Yupeng Jiang and Yingpu Deng\\\\
Key Laboratory of Mathematics Mechanization, \\Academy of
Mathematics and Systems Science,\\ Chinese Academy of Sciences,
Beijing, China, 100190\\ \{jiangyupeng,dengyp\}@amss.ac.cn}
\date{}
\maketitle
\begin{abstract}
Define $\psi_m$ to be the smallest strong pseudoprime to the first $m$ prime bases. The exact value of $\psi_m$ is known for $1\le m \le 8$. Z. Zhang have
found a 19-decimal-digit number $Q_{11}=3825\,12305\,65464\,13051$ which is a strong pseudoprime to the first 11 prime bases and he conjectured that
$$\psi_9=\psi_{10}=\psi_{11}=Q_{11}.$$ We prove the conjecture by algorithms.
\end{abstract}
\textbf{Keywords.} Strong Pseudoprimes, Chinese Remainder Theorem

\section{Introduction}

If $n$ is prime, in view of Fermat's little theorem, the congruence $$a^{n-1}\equiv 1 \mod n$$ holds for every $a$ with gcd($a,n$)=1. There are composite numbers also satisfying the congruence. Such an odd composite number $n$ is called a pseudoprime to base $a$ (psp($a$) for short). Moreover for an odd prime $n$, let $n-1=2^{s}d$ with $d$ odd, we have $$a^d\equiv 1 \mod n$$ or $$a^{2^kd}\equiv -1 \mod n$$ for some $k$ satisfying $0\le k< d$. If a composite number $n$ satisfies these two equations, we call $n$ a strong pseudoprime to baes $a$ (spsp($a$) for short). This is the basic of Rabin-Miller test\cite{MR}.

Define $\psi_m$ to be the smallest strong pseudoprime to all the first $m$ prime bases. If $n<\psi_m$, then only $m$ strong pseudoprime tests are needed to find out whether $n$ is prime or not. If we know the exact value of $\psi_m$, then for integers $n<\psi_m$, there is a deterministic primality testing algorithm which is easier to understand and also faster than ever known other tests. The exact value of $\psi_m$ for $1\le m\le 8$ is known\cite{J,PSS}.
\begin{eqnarray*}
\psi_1 &=& 2047\\
\psi_2 &=& 1373653\\
\psi_3 &=& 25326001\\
\psi_4 &=& 32150\,31751\\
\psi_5 &=& 215\,23028\,98747\\
\psi_6 &=& 347\,47496\,60383\\
\psi_7 &=& 34155\,00717\,28321\\
\psi_8 &=& 34155\,00717\,28321
\end{eqnarray*}
In paper \cite{J}, Jaeschke also gave upper bounds for $\psi_9,\ \psi_{10},\ \psi_{11}$. These bounds were improved by Z. Zhang for several times
and finally he conjectured that
\begin{eqnarray*}
\psi_9=\psi_{10}=\psi_{11}=Q_{11}&=&3825\,12305\,65464\,13051\\
&=&149491\cdot747451\cdot34233211
\end{eqnarray*}
Zhang also gave upper bounds and conjectures for $\psi_m$, with $12\le m\le 20$ (see \cite{ZH1,ZH2,ZHT}).

In this paper, we develop several algorithms to get the following conclusion.
\begin{Clm}
$\psi_9=\psi_{10}=\psi_{11}=Q_{11}=3825\,12305\,65464\,13051.$
\end{Clm}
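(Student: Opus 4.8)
The plan is first to reduce the claim to a finiteness-plus-search statement. Since being a strong pseudoprime to the first $m+1$ prime bases is at least as strong a condition as to the first $m$, we automatically have $\psi_9\le\psi_{10}\le\psi_{11}$, and Zhang's verification that $Q_{11}$ is spsp to the first $11$ prime bases gives $\psi_{11}\le Q_{11}$. Hence it suffices to prove the reverse bound $\psi_9\ge Q_{11}$, i.e.\ that \emph{no} odd composite $n<Q_{11}$ is simultaneously a strong pseudoprime to the nine bases $p_1=2,\,p_2=3,\,\ldots,\,p_9=23$. I would record at the outset the arithmetic characterization of the spsp condition: writing $n-1=2^sd$ with $d$ odd and $n=\prod_j q_j^{e_j}$, the number $n$ is spsp$(a)$ if and only if either $\mathrm{ord}_{q_j^{e_j}}(a)$ is odd for every $j$, or there is a single $k$ with $0\le k<s$ such that $\nu_2\bigl(\mathrm{ord}_{q_j^{e_j}}(a)\bigr)=k+1$ for every $j$. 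The essential point drawn from this is that the $2$-adic valuations of the orders of $a$ modulo the prime-power components must all coincide, and their odd parts must all divide $d$.

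Second, I would prove structural lemmas that confine the candidates to a searchable family. Because $Q_{11}\approx 3.8\times10^{18}$, any candidate with at least three prime factors has smallest factor below $Q_{11}^{1/3}<1.6\times10^{6}$, while any candidate with exactly two prime factors $n=pq$ has $p<Q_{11}^{1/2}<2\times10^{9}$. I would further bound the number of distinct prime factors, since strong pseudoprimality to many bases forces few factors, and dispose of non-squarefree $n$ and higher prime-power components using the order characterization above. This should leave only a few shapes to examine, essentially $n=pq$ and $n=pqr$ (together with the few cases carrying a repeated factor), each handled separately.

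Third, for each shape I would convert the matching-valuation conditions into congruences and enumerate by the Chinese Remainder Theorem, which is where the keyword of the paper enters. For a fixed smallest prime $p$, requiring that each base $p_i$ have a prescribed $2$-adic valuation of its order modulo $p$, together with the matching condition imposed across the remaining factors, pins those factors into fixed residue classes modulo quantities built from $p$ and from the orders $\mathrm{ord}_p(p_i)$; sieving these classes up to $Q_{11}/p$ and testing the finitely many survivors yields every candidate with that smallest factor. Iterating over all admissible $p$ then settles the three-factor case completely.

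The main obstacle will be the two-prime-factor case $n=pq$: here $p$ ranges up to nearly $2\times10^{9}$, far too many values to treat individually, so the search must be reorganized rather than run as a blind scan. The key is to exploit the coupling between $p$ and $q$ — the common $2$-adic valuation $k+1$ and the divisibility of the odd parts of $\mathrm{ord}_p(p_i)$ and $\mathrm{ord}_q(p_i)$ by $d=(n-1)/2^{s}$ — to prune the range of $p$ sharply and to replace an exhaustive sweep of $q$ by intersecting explicit arithmetic progressions. Guaranteeing that this pruned enumeration is simultaneously \emph{complete}, in that it overlooks no configuration of the valuation and congruence bookkeeping, and computationally feasible is the crux of the argument; it is the correctness of the algorithms, rather than any single deep inequality, that carries the proof.
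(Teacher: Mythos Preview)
Your outline is essentially the paper's strategy: reduce to $\psi_9\ge Q_{11}$, use the matching of $2$-adic valuations of the orders across prime factors, discard non-squarefree $n$ via the Wieferich-type obstruction for bases $2$ and $3$, split by the number $t$ of prime factors, and inside each case use CRT/Legendre-symbol constraints to sieve. Two points, however, are places where your plan is either too optimistic or misdirected relative to what actually makes the search go through.

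First, you assert that structural lemmas ``should leave only \dots\ essentially $n=pq$ and $n=pqr$.'' There is no clean inequality that rules out $t\ge 4$ a priori; the paper disposes of $t\ge 5$ and $t=4$ by a short but genuine computation (for $t\ge5$ one lists, for each $p_1\le Q_{11}^{1/5}$, the first few primes with the same $\sigma^v$, and for $t=4$ one enumerates feasible triples $(p_1,p_2,p_3)$ and solves a single congruence for $p_4$). You should plan to do this rather than hope it collapses.

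Second, for $t=2$ you say one must ``prune the range of $p$ sharply.'' In fact the paper does \emph{not} prune $p$; it iterates over every prime $p_1<\sqrt{Q_{11}}$. What makes this feasible is a preliminary computation (about 15 hours in the paper) showing that $\lambda_{p}:=\mathrm{lcm}\bigl(\mathrm{ord}_p(2),\ldots,\mathrm{ord}_p(23)\bigr)$ satisfies $(p-1)/\lambda_p\le 7$ for every $p<\sqrt{Q_{11}}$. Since $p_2\equiv 1\pmod{\lambda_{p_1}}$, this means roughly $Q_{11}/p_1^2$ candidates for $p_2$, summable over $p_1$. The Legendre-symbol/CRT refinement you mention cuts a further constant factor. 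The remaining obstacle is small $p_1$ (say $p_1<10^6$), where $Q_{11}/p_1^2$ is still huge; here the paper's key device---which your plan lacks---is to compute $\gcd(2^{p_1-1}-1,\,3^{p_1-1}-1)$ and factor it, since any admissible $p_2$ must divide this gcd. The same trick rescues the $t=3$ case when $p_1p_2$ is small. Without the $\lambda_p\approx p-1$ verification and the gcd-factoring trick, your $t=2$ search would not terminate.
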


This article is organized like this. In \S 2 we give notations and basic facts needed for our algorithms. In \S 3 we get the properties of primes up to $\sqrt{Q_{11}}$ which give us much information to design our algorithm. Just as in \cite{J}, we consider the number of prime divisors of the testing number.
Let $n=p_1\cdot p_2\dots p_{t}$. In \S 4 we consider $t\ge5$ and $t=4$ respectively, \S 5 for $t=3$ and \S 6 for $t=2$. In \S 7 we get our conclusion and give the total time we need for our algorithms.

\section{Foundations of algorithms}

In this section, we give the foundations for our algorithm. Let $p$ be a prime, $a$ is an integer with gcd($a, p$)=1, denote the smallest positive integer $e$
such that $a^e\equiv 1\mod p$ by $Ord_{p}(a)$. For example, we have $Ord_{5}(2)=4$. Moreover for any integer $n$, if $n=p^en'$ with gcd($n, n'$)=1, we denote $e$ by $Val_p(n)$. In this article, we only use $Val_p(n)$ for $p=2$, we write $Val(n)$ by abbreviation. For $v\in \mathbb{Z}^n$, $v=(a_1, \dots, a_n)$ with all gcd($a_i, p$)=1
we define $$\sigma_{p}^{v}=(Val(Ord_p(a_1)), \dots, Val(Ord_p(a_n))).$$ If $n$ is a pseudoprime (or strong pseudoprime) for all the $a_i$s, we denote it by psp($v$) (or spsp($v$)).

We need to check all odd integers less than $Q_{11}$ to see if there are strong pseudoprimes to the first nine bases.
First we are going to exclude the integers having square divisors. If $n$ is a psp($a$) and $p^2|n$ for some prime $p$,
then we have $$a^{n-1}\equiv 1\mod p^2.$$ also $$a^{p(p-1)}\equiv 1\mod p^2.$$ As gcd($p,n-1$)=1, we have $$a^{p-1}\equiv 1\mod p^2$$
For $a=2$ and 3, $$2^{p-1}\equiv 1\mod p^2, \qquad 3^{p-1}\equiv 1\mod p^2$$ These two equation do not hold simultaneously for any prime $p$ less than $3\cdot10^9$\cite{PSS}, which is greater than $\sqrt{Q_{11}}\approx 1.9\cdot10^9$, so we only need to consider squarefree integers.

Now we give the following important proposition(also see \cite{J}).
\begin{Prop}
Let $n=p_1\dots p_t$ with different primes $p_1, \dots, p_t$, $v=(a_1, \dots, a_m)$ with different integers such that gcd($a_i,p_j$)=1 for all $i=1,\dots,m,\ j=1,\dots,t$. Then $n$ is an spsp(v) iff $n$ is a psp(v) and $\sigma_{p_1}^{v}=\dots=\sigma_{p_t}^{v}$.
\end{Prop}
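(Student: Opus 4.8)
The plan is to reduce everything modulo the individual primes $p_j$ using the Chinese Remainder Theorem, which is legitimate because $n$ is squarefree. Write $n-1 = 2^s d$ with $d$ odd, and for each base $a_i$ and prime $p_j$ set $e_{ij} = Ord_{p_j}(a_i)$ and decompose $e_{ij} = 2^{\alpha_{ij}} f_{ij}$ with $f_{ij}$ odd, so that $\alpha_{ij} = Val(Ord_{p_j}(a_i))$ is precisely the $i$-th entry of the vector $\sigma_{p_j}^v$. The whole argument will rest on a single-prime computation of the powers $a_i^{2^k d} \bmod p_j$.

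First I would establish the key single-prime fact: for fixed $i,j$ and any $k\ge 0$, one has $a_i^{2^k d}\equiv 1 \pmod{p_j}$ iff $f_{ij}\mid d$ and $k\ge \alpha_{ij}$, while $a_i^{2^k d}\equiv -1 \pmod{p_j}$ iff $f_{ij}\mid d$ and $k=\alpha_{ij}-1$ (which in particular forces $\alpha_{ij}\ge 1$). Both follow from the equivalence $a_i^{2^k d}\equiv 1 \pmod{p_j} \iff e_{ij}\mid 2^k d$, together with the fact that a square root of $1$ modulo the prime $p_j$ is $\pm 1$. The crucial consequence is that, for each prime, there is exactly one exponent $k$ that can produce $-1$, namely $k=\alpha_{ij}-1$.

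Next I would assemble these into statements modulo $n$. By CRT, $a_i^d\equiv 1 \pmod n$ iff $f_{ij}\mid d$ and $\alpha_{ij}=0$ for every $j$, and $a_i^{2^k d}\equiv -1 \pmod n$ iff $f_{ij}\mid d$ and $\alpha_{ij}=k+1$ for every $j$. Hence $n$ is spsp($a_i$) iff $f_{ij}\mid d$ for all $j$ and the valuations $\alpha_{ij}$ are equal across all $j$ (the common value being $0$ in the first case and $k+1\le s$ in the second). Comparing with the pseudoprime condition $a_i^{n-1}=a_i^{2^s d}\equiv 1 \pmod n$, which by the same computation reads $f_{ij}\mid d$ for all $j$ and $\alpha_{ij}\le s$ for all $j$, I obtain the per-base equivalence: $n$ is spsp($a_i$) iff $n$ is psp($a_i$) and $\alpha_{i1}=\dots=\alpha_{it}$.

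Finally I would quantify over all bases $i=1,\dots,m$. Since requiring $\alpha_{i1}=\dots=\alpha_{it}$ for every $i$ is exactly the equality of vectors $\sigma_{p_1}^v=\dots=\sigma_{p_t}^v$, the per-base equivalence yields the proposition. I expect the main obstacle to be the careful handling of the $-1$ congruence under CRT: the point that a single exponent $k$ must simultaneously produce $-1$ modulo every $p_j$, combined with the per-prime uniqueness $k=\alpha_{ij}-1$, is what forces the valuations to coincide, and this is precisely the step that converts the equal-$\sigma$ hypothesis into the strong pseudoprime property.
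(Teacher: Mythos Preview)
Your proposal is correct and follows essentially the same route as the paper: reduce modulo each prime via the Chinese Remainder Theorem, observe that $a^{2^k d}\equiv -1\pmod{p_j}$ pins down $Val(Ord_{p_j}(a))=k+1$ while $a^d\equiv 1\pmod{p_j}$ forces $Val(Ord_{p_j}(a))=0$, and conclude that the strong pseudoprime condition is equivalent to the pseudoprime condition together with equality of all the valuation vectors. Your write-up is more explicit than the paper's (you spell out the odd-part divisibility $f_{ij}\mid d$ and the bound $\alpha_{ij}\le s$ that encode the psp condition, and you verify both directions of the equivalence carefully), but the underlying argument is the same.
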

\begin{proof}
Let $n-1=2^sd$ with $d$ odd. By Chinese Remainder Theorem $$a^{2^kd}\equiv -1 \mod n\Longleftrightarrow a^{2^kd}\equiv -1\mod p_i$$ for all $1\le i\le t$, so $Val(Ord_{p_i}(a))=k+1$ for all $i$. And $$a^{d}\equiv 1 \mod n\Longleftrightarrow a^{d}\equiv 1\mod p_i$$ for all $1\le i\le t$, so $Val(Ord_{p_i}(a))=0$ for all $i$. The proposition is an immediate consequence of the above argument.
\end{proof}
This is the main necessary condition that we use to find strong pseudoprimes. In our algorithm, $v=(2,3,5,7,11,13,17,19,23)$, For a given prime $p$, we need to find prime $q$ satisfying $\sigma_{p}^v=\sigma_{q}^v$. A problem we have to face is that there are too many candidates of $q$, so we need another proposition(also see \cite{J}).
\begin{Prop}
For primes $p,q$, if $Val(p-1)=Val(q-1)$ and $\sigma_p^{(a)}=\sigma_q^{(a)}$, then the Legendre symbol $(\frac{a}{p})=(\frac{a}{q})$.
\end{Prop}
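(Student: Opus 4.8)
The plan is to reduce the claim to a clean characterization of the Legendre symbol purely in terms of the two quantities appearing in the hypothesis, namely $Val(p-1)$ and $Val(Ord_p(a)) = \sigma_p^{(a)}$. Once such a characterization is in hand, the conclusion will be immediate, since the hypotheses assert precisely that these two quantities agree for $p$ and $q$.

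First I would set up the $2$-adic bookkeeping inside the cyclic group $(\mathbb{Z}/p\mathbb{Z})^*$. Writing $p-1 = 2^s u$ with $u$ odd, so that $s = Val(p-1)$, and $Ord_p(a) = 2^e w$ with $w$ odd, so that $e = \sigma_p^{(a)}$, I note that $Ord_p(a) \mid p-1$ forces $w \mid u$ and $0 \le e \le s$. By Euler's criterion, $a$ is a quadratic residue mod $p$ exactly when $a^{(p-1)/2} \equiv 1 \pmod p$, i.e. exactly when $Ord_p(a) \mid (p-1)/2 = 2^{s-1} u$. Since $w \mid u$ holds automatically, this divisibility is equivalent to the single inequality $e \le s-1$. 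Thus I obtain the desired characterization:
$$\left(\frac{a}{p}\right) = 1 \iff Val(Ord_p(a)) < Val(p-1), \qquad \left(\frac{a}{p}\right) = -1 \iff Val(Ord_p(a)) = Val(p-1).$$

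With this lemma the proposition follows at once: the hypotheses $Val(p-1) = Val(q-1)$ and $\sigma_p^{(a)} = \sigma_q^{(a)}$ say that the pair $(Val(Ord_p(a)), Val(p-1))$ equals the pair $(Val(Ord_q(a)), Val(q-1))$, so the strict-inequality-versus-equality dichotomy resolves in the same way for both primes, and hence $\left(\frac{a}{p}\right) = \left(\frac{a}{q}\right)$.

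I do not expect a genuine obstacle here; the content is entirely in the equivalence above. The one point deserving care is the boundary case $e = s$, where one must observe that $Ord_p(a)$ cannot carry a higher power of $2$ than $p-1$ does, so that $e \le s$ always holds and the two cases $e < s$ and $e = s$ exhaust all possibilities. This is exactly what guarantees that a maximal $2$-adic valuation of the order is the precise signature of a quadratic nonresidue, and it is where I would be most careful to avoid an off-by-one slip in the exponent.
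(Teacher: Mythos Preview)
Your proof is correct and follows the same approach as the paper: the paper's entire argument is the one-line equivalence $\sigma_p^{(a)}=Val(p-1)\Longleftrightarrow (\frac{a}{p})=-1$, which is exactly the characterization you establish (with the details spelled out via Euler's criterion and the $2$-adic bookkeeping). Nothing to add.
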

\begin{proof}
This follows from $$\sigma_p^{(a)}=Val(p-1)\Longleftrightarrow (\frac{a}{p})=-1.$$
\end{proof}
Notice that if $p\equiv q\equiv 3 \mod 4$ in the above proposition, the inverse is also true. This is important and then we can use Chinese Remainder Theorem to reduce candidates. We'll give details in the following sections.

\section{Primes up to $\sqrt{Q_{11}}$}

From now on, we fix $v=(2,3,5,7,11,13,17,19,23)$. If $n$ is a psp($v$) and prime $p|n$, as $a^{n-1}\equiv1\mod p$, then $$Ord_p(a)|(n-1),\qquad a=2,3,5,7,11,13,17,19,23.$$ Define $\lambda_p$ to be the least common multiple of the nine orders, then we have $$\lambda_p|(n-1),\qquad \lambda_p|(p-1).$$ This point is helpful when designing our algorithms. Let $\mu_p=(p-1)/\lambda_p$, we develop an algorithm to calculate $\mu_p$ for $p$ up to $\sqrt{Q_{11}}$. It takes about 15 hours and find that $\mu_p$ is very small. We tabulate our results as following.

In the table, for each value of $\mu_p$, we give the first and last several primes. There are two rows with $\mu_p=2$, one for $p\equiv 3\mod 4$ and the other for $p\equiv 1\mod 4$. The binary row is for primes $p$ with $$p\equiv 1\mod 4,\qquad\sigma_p^{v}\in\{0,1\}^9.$$ Since $(\frac{2}{p})=-1$ for $p\equiv 5\mod 8$, in the second $\mu_p=2$ row all $p$ are in the residue class $1\mod 8$. For the same reason, in the binary row also with $p\equiv 1\mod 8$, as there is no prime with $\mu_p\ge 8$, all primes in binary row are $9\mod 16$ and with $\mu_p=4$. In the last column, we give the total number of each kind of primes.
\begin{center}
\begin{tabular}{|c|l|c|}
\multicolumn{3}{c}{$\mu_p$ for $p$ up to $\sqrt{Q_{11}}$ }\\[5pt]
\hline
$\mu_p$                              & \multicolumn{1}{c|}{primes}                      & total     \\ \hline
\raisebox{-7pt}[0pt][0pt]{2}         & 18191,\ 31391,\ 35279,\ 38639,\ 63839,\ 95471,   &           \\
                                     & 104711,\ 147671,\dots,\ 1955593559,\ 1955627519, & 93878     \\
\raisebox{7pt}[0pt]{$p\equiv 3(4)$}  & 1955645831,\ 1955687159,\ 1955728199             &           \\ \hline
\raisebox{-7pt}[0pt][0pt]{2}         & 87481,\ 185641,\ 336361,\ 394969,\ 483289,       &           \\
                                     & 504001,\ 515761,\dots,\ 1955712529,\ 1955713369, & 91541     \\
\raisebox{7pt}[0pt]{$p\equiv 1(4)$}  & 1955740609,\ 1955743729,\ 1955760361             &           \\ \hline
                                     & 4775569,\ 5057839,\ 5532619,\ 7340227,\ 7561207  &           \\
3                                    & 8685379,\ 9734161,\dots,\ 1953162751,            & 2226      \\
                                     & 1953185551,\ 1954279519,\ 1955425393             &           \\ \hline
                                     & 25433521,\ 120543721,\ 129560209,\ 138156769,    &           \\
4                                    & 148405321,\ 174865681,\dots,\ 1838581369,        & 111       \\
                                     & 1867026001,\ 1892769649,\ 1918361041             &           \\ \hline
                                     & 650086271,\ 792798571,\ 858613901,               &           \\
\raisebox{6.5pt}[0pt]{5}             & 1794251801,\ 1820572771,\ 1947963301             & \raisebox{6.5pt}[0pt]{6}   \\ \hline
6                                    & 1785200041                                       & 1         \\ \hline
7                                    & 945552637                                        & 1         \\ \hline
                                     & 120543721,\ 148405321,\ 200893081,\ 224683369,   &           \\
binary                               & 421725529,\ 481266361,\dots,\ 1717490329,        & 45        \\
                                     & 1810589881,\ 1828463641,\ 1838581369             &           \\ \hline
\end{tabular}
\end{center}

\section{$t\ge5$ and $t=4$}

As from the above, we only need to consider squarefree integers. we always denote $n=p_1\dots p_t$ with $p_1<\dots<p_t$. In this section, we are going to exclude the two cases when $t\ge5$ and $t=4$.

\subsection{$t\ge5$}
For $p$ up to $[\sqrt[5]{Q_{11}}]=5206$, let $S_p$ be the set of all primes $q$ with $\sigma_q^v=\sigma_p^v$, and denote $k$th element in $S_p$ by $s_{p,k}$ in ascending order. Our algorithm puts out the first $l$ elements of $S_p$ with $l>5$ and $$\prod_{i=1}^5s_{p,i} \le Q_{11},\qquad(\prod_{i=1}^4s_{p,i})s_{p,l}>Q_{11}.$$ It takes less than 22 seconds and puts out six sequences. We give our result in the following table.
\begin{center}
\begin{tabular}{|l|c|c|}
\multicolumn{3}{c}{sequence with equal $\sigma_p^v$}\\[5pt] \hline
                                                     & $\sigma_p^v$        & No.\\ \hline
167, 3167, 11087, 14423, 21383, 75407                & (0,0,1,0,0,1,1,0,1) & 1  \\ \hline
263, 1583, 8423, 9767, 12503, 18743, 50423,          &                     &    \\
54623, 106367, 127247                                & \raisebox{7pt}[0pt]{(0,0,1,1,0,0,0,1,0)} & \raisebox{7pt}[0pt]{13} \\ \hline
443, 4547, 5483, 8243, 19163, 26987, 42683           & (1,0,1,1,1,0,0,1,1) & 2  \\ \hline
463, 1087, 13687, 17383, 25447, 37447                & (0,1,1,1,1,1,0,1,1) & 1  \\ \hline
479, 4919, 5519, 6599, 7559, 29399, 51719            & (0,0,0,0,0,1,1,1,0) & 4  \\ \hline
2503, 2767, 5167, 5623, 11887, 31543                 & (0,1,1,1,0,1,0,0,0) & 1  \\ \hline
\end{tabular}
\end{center}
At first glance we know $t>5$ is impossible, Then we check these six sequences if they can make up an spsp($v$) with 5 prime divisors. The last column is the number of integers with $t=5$ and less than $Q_{11}$ in each sequence. Our checking algorithm terminates in less than 0.1 second and finds no strong pseudoprime.

There are details about our algorithm needing to explain. Notice that when $p_1\equiv3 \mod4$, and finding $q$ with $\sigma_{p_1}^v=\sigma_q^v$, as the least binary prime is 120543721. In fact we only need to check $q\equiv 3\mod 4$. by proposition 2, consider $$(\frac{2}{p_1})=(\frac{2}{q}),\qquad(\frac{3}{p_1})=(\frac{3}{q}).$$ we only need to check $q\equiv p_1\mod 24$. also in $p_1\equiv3\mod 4$ case, we calculate the Lengedre symbol $(\frac{\cdot}{p_1})$ instead of $Val(Ord_{p_1}(\cdot))$.

\subsection{t=4}
For $t=4$, we first define $(p_1, p_2, p_3)$ to be a feasible 3-tuple if it satisfies $$p_1<p_2<p_3,\quad \sigma_{p_1}^v=\sigma_{p_2}^v=\sigma_{p_3}^v,\quad p_1p_2p_3^2<Q_{11}.$$ Our algorithm goes like this: for each $p_1$ up to $[\sqrt[4]{Q_{11}}]=44224$, find feasible 3-tuples $(p_1, p_2, p_3)$. As $\lambda_{p_i}|n-1$, for $i=1,2,3$. let $\lambda$ be the least common multiple of these three numbers, and $b=p_1p_2p_3$, then we have $$n=bp_4\equiv 1\mod \lambda.$$ If gcd$(b,\lambda)\ne 1$, it is impossible to have such $n$. If gcd$(b,\lambda)=1$, we need to check all $p_4$ with
$$p_3<p_4\le Q_{11}/b,\qquad p_4\equiv b^{-1}\mod \lambda$$ Our algorithm takes about 15 minutes, finding 88729 feasible 3-tuples and no spsp($v$) with $t=4$. As for $t=5$, when $p_1\equiv 3\mod 4$, we use Legendre symbol and $q\equiv p_1\mod 24$ to shorten our running time.

\section{$t=3$}
As above, we define feasible 2-tuple $(p_1, p_2)$ with $$p_1<p_2, \quad\sigma_{p_1}^v=\sigma_{p_2}^v,\quad p_1p_2^2<Q_{11}$$
Our algorithm is just as $t=4$ case, for each $p_1$ up to $[\sqrt[3]{Q_{11}}]=1563922$, find feasible 2-tuples $(p_1, p_2)$. Let $b=p_1p_2$ and $\lambda=\text{lcm}(\lambda_{p_1},\lambda_{p_2})$, then $\lambda |n-1$. If gcd$(b,\lambda)=1$, we check all $p_3$ with
$$p_2<p_3\le Q_{11}/b,\qquad p_3\equiv b^{-1}\mod \lambda.$$ We divide our algorithm into three parts according $p_1\equiv 3\mod 4$, $p_1\equiv 5\mod 8$ and $p_1\equiv 1\mod 8$, also we use Chinese Remainder Theorem to reduce candidates.

\subsection{$p_1\equiv3\mod 4$}
For $p_1\equiv3 \mod 4$, we first assume $p_2\equiv 3\mod 4$. as from \S 3, we know if $p_2\equiv1 \mod 4$, $p_2$ must be a binary prime and so $\mu_{p_2}=4$. There are only 111 such primes up to $\sqrt{Q_{11}}$, we'll check these numbers later. By proposition 2, we use the first 5 primes and $$(\frac{a}{p_1})=(\frac{a}{p_2}),\qquad a=2,3,5,7,11$$ reducing to 30 residue classes module $9240=8\cdot3\cdot5\cdot7\cdot11$.

\noindent{\bf{Example 1}}. For $p_1=31$, the first module 4 equaling 3 prime. Feasible 2-tuple $(31,p_2)$ must with $$p_2<[\sqrt{Q_{11}/31}]=351270645$$ If we do not have \S 3, we need to check all the odd number greater than 31,  there are about $1.7\cdot 10^8$ candidates. If we do it as for $t=5$ and 4, there are $1.4\cdot10^7$ candidates. For our method, there are only $30\cdot\frac{351270645}{9240}\approx1.1\cdot10^6$ candidates.

There is another trick we used. if $b=p_1p_2$ is less than $2\cdot10^6$, the correspond $\lambda$ may be too small. We do not find $p_3$ as the above describes. In fact, as $$n=bp_3\equiv b\mod {p_3-1}$$ and $$a^{n-1}\equiv a^{b-1}\equiv 1\mod p_3, \qquad a=2,3$$ We calculate gcd$(2^{b-1}-1, 3^{b-1}-1)$ then factor it to get the prime divisor which is greater than $p_2$ and less than $Q_{11}/b$. Without this trick, our algorithm run more than 24 hours and still din't terminate. When using the trick, the algorithm takes less than 5 hours. It gives 10524046 feasible 2-tuples and the single spsp($v$) $$Q_{11}=3825\ 12305\ 65464\ 13051= 149491\cdot747451\cdot34233211.$$

The following table gives all the 37 feasible 2-tuples with multiple less than $2\cdot10^6$, which can explains why the first case takes so long time.

\noindent{\bf{Example 2}}. Notice that for some $b$ the $\lambda$ is small. For $b=43\cdot9283=339169$, we need to check all $p_3$ with
$$9283<p_3<Q_{11}/b\approx1.1\cdot10^{13},\qquad p_3\equiv7771\mod9282$$
and for $b=571\cdot2851=1627921$, all $p_3$ with
$$p_2<p_3<Q_{11}/b\approx2.3\cdot10^{12},\qquad p_3\equiv2281\mod2580$$
These are really time-consuming.

\subsection{$p_1\equiv5\mod 8$}
If $p_1\equiv 5\mod 8$, as $(\frac{2}{p_1})=-1$, $Val(Ord_{p_1}(2))=2$, so for each $p_2$ with $\sigma_{p_2}^v=\sigma_{p_1}^v$, we must have $p_2\equiv1\mod4$. If $p_2\equiv 5\mod 8$, by proposition 2, we use the first 5 primes then $$(\frac{a}{p_2})=(\frac{a}{p_1}),\qquad a=2,3,5,7,11.$$ There are 30 residue classes module 9240.
If $p_2\equiv 1\mod 8$, for $p_2\equiv 1\mod 16$, we must have $\mu_{p_2}=4$, we'll check these numbers later. For $p_2\equiv 9\mod 16$,
we must have$$(\frac{a}{p_2})=1,\qquad a=2,3,5,7,11.$$ There are 30 residue classes module 18480. The total time for checking all $p_1$ up to 1563922 is about 10 hours and we find 522239 feasible 2-tuples with no spsp($v$).

\subsection{$p_1\equiv1\mod8$}
For $p_1\equiv1\mod8$, denote $e=Val(p_1-1)$ and $f=Val(\lambda_{p_1})$, then $f\le e$ and $$p_1\equiv 1+2^{e}\mod{2^{e+1}},\qquad p_2\equiv 1\mod {2^f}$$ for $\sigma_{p_2}^v=\sigma_{p_1}^v$. If $f=e$, then we consider two cases. For $p_2\equiv 1+2^{e}\mod{2^{e+1}}$, we have
$$(\frac{a}{p_2})=(\frac{a}{p_1}),\qquad a=2,3,5,7,11$$
\begin{center}
\begin{tabular}{|l|l|l|l|c|}
\multicolumn{5}{c}{feasible $(p_1,p_2)$ with $b<2\cdot10^6$}\\[5pt]\hline
b        & $p_1$ & $p_2$ & $\lambda$  & $\sigma_{p_1}^v$             \\ \hline
 685441  & 31   & 22111  & 22110    & ( 0, 1, 0, 0, 1, 1, 1, 0, 1 )  \\ \hline
 919801  & 31   & 29671  & 29670    & ( 0, 1, 0, 0, 1, 1, 1, 0, 1 )  \\ \hline
1267249  & 31   & 40879  & 204390   & ( 0, 1, 0, 0, 1, 1, 1, 0, 1 )  \\ \hline
 399169  & 43   & 9283   & 9282     & ( 1, 1, 1, 1, 0, 0, 0, 1, 0 )  \\ \hline
 703609  & 43   & 16363  & 114534   & ( 1, 1, 1, 1, 0, 0, 0, 1, 0 )  \\ \hline
1379569  & 43   & 32083  & 224574   & ( 1, 1, 1, 1, 0, 0, 0, 1, 0 )  \\ \hline
1487929  & 43   & 34603  & 242214   & ( 1, 1, 1, 1, 0, 0, 0, 1, 0 )  \\ \hline
1772761  & 43   & 41227  & 288582   & ( 1, 1, 1, 1, 0, 0, 0, 1, 0 )  \\ \hline
 741049  & 47   & 15767  & 362618   & ( 0, 0, 1, 0, 1, 1, 0, 1, 1 )  \\ \hline
1879201  & 47   & 39983  & 919586   & ( 0, 0, 1, 0, 1, 1, 0, 1, 1 )  \\ \hline
 117049  & 67   & 1747   & 19206    & ( 1, 1, 1, 1, 1, 1, 0, 0, 0 )  \\ \hline
1578721  & 67   & 23563  & 23562    & ( 1, 1, 1, 1, 1, 1, 0, 0, 0 )  \\ \hline
1354609  & 71   & 19079  & 667730   & ( 0, 0, 0, 1, 1, 1, 1, 0, 1 )  \\ \hline
 722929  & 79   & 9151   & 118950   & ( 0, 1, 0, 1, 0, 0, 1, 0, 0 )  \\ \hline
1272769  & 79   & 16111  & 209430   & ( 0, 1, 0, 1, 0, 0, 1, 0, 0 )  \\ \hline
 457081  & 83   & 5507   & 225746   & ( 1, 0, 1, 0, 0, 1, 0, 1, 0 )  \\ \hline
1391329  & 83   & 16763  & 687242   & ( 1, 0, 1, 0, 0, 1, 0, 1, 0 )  \\ \hline
1739929  & 83   & 20963  & 859442   & ( 1, 0, 1, 0, 0, 1, 0, 1, 0 )  \\ \hline
1652401  & 107  & 15443  & 818426   & ( 1, 0, 1, 1, 0, 0, 1, 0, 0 )  \\ \hline
1730689  & 139  & 12451  & 286350   & ( 1, 1, 0, 0, 0, 0, 1, 1, 1 )  \\ \hline
1790881  & 163  & 10987  & 296622   & ( 1, 1, 1, 1, 1, 1, 1, 1, 1 )  \\ \hline
 528889  & 167  & 3167   & 262778   & ( 0, 0, 1, 0, 0, 1, 1, 0, 1 )  \\ \hline
1851529  & 167  & 11087  & 920138   & ( 0, 0, 1, 0, 0, 1, 1, 0, 1 )  \\ \hline
1892881  & 211  & 8971   & 62790    & ( 1, 1, 0, 1, 0, 0, 1, 0, 1 )  \\ \hline
1552849  & 229  & 6781   & 128820   & ( 2, 0, 1, 2, 1, 2, 0, 0, 2 )  \\ \hline
 416329  & 263  & 1583   & 207242   & ( 0, 0, 1, 1, 0, 0, 0, 1, 0 )  \\ \hline
 223609  & 311  & 719    & 111290   & ( 0, 0, 0, 0, 1, 0, 1, 1, 1 )  \\ \hline
1912849  & 331  & 5779   & 317790   & ( 1, 1, 0, 1, 1, 1, 0, 0, 1 )  \\ \hline
 825841  & 379  & 2179   & 45738    & ( 1, 1, 0, 1, 1, 1, 1, 0, 0 )  \\ \hline
 540409  & 439  & 1231   & 89790    & ( 0, 1, 0, 0, 0, 0, 1, 0, 1 )  \\ \hline
 503281  & 463  & 1087   & 83622    & ( 0, 1, 1, 1, 1, 1, 0, 1, 1 )  \\ \hline
 929041  & 503  & 1847   & 463346   & ( 0, 0, 1, 0, 0, 0, 1, 1, 0 )  \\ \hline
1627921  & 571  & 2851   & 2850     & ( 1, 1, 0, 1, 0, 0, 1, 1, 0 )  \\ \hline
1280449  & 787  & 1627   & 213006   & ( 1, 1, 1, 0, 0, 1, 1, 0, 0 )  \\ \hline
1616521  & 919  & 1759   & 268974   & ( 0, 1, 0, 1, 0, 0, 0, 1, 0 )  \\ \hline
1538161  & 1063 & 1447   & 255942   & ( 0, 1, 1, 0, 0, 0, 0, 0, 0 )  \\ \hline
1772521  & 1103 & 1607   & 884906   & ( 0, 0, 1, 1, 1, 1, 0, 1, 0 )  \\ \hline
\end{tabular}
\end{center}
There are 30 residue classes module $2^{e+1}\cdot1155$. For $p_2\equiv1+2^{e+1}\mod{2^{e+2}}$, we have
$$(\frac{a}{p_2})=1,\qquad a=2,3,5,7,11$$ 30 residue classer module $2^{e+2}\cdot1155$. The $p_2\equiv1\mod{2^{e+2}}$ case is left for the prime with $\mu_{p_2}=4$. If $f<e$, we only check $p_2\equiv p_1\mod 2^f$. In fact, according to \S 3, this only happens when $f=e-1$ and $\mu_{p_1}=2$. There are only 50 such primes up to 1563922. Our algorithm takes less than 100 minutes and finds 30728 feasible 2-tuples and no spsp($v$).

\subsection{$\mu_{p_2}=4$}
In the above three cases, we don't consider the case $\mu_{p_2}=4$. Now we assume $\mu_{p_2}=4$, as we also have $$p_1\ge 29, \qquad p_1p_2^2\le Q_{11}$$ So $p_2<363181490$. According \S 3, there only 12 primes under this bound. We check all of them and find no feasible 2-tuples. Until now we finish the $t=3$ case and find only one spsp($v$) $Q_{11}$. The total time is less than 17 hours.

\section{t=2}
For $t=2$, there is no need to define feasible 1-tuples. As $\lambda_{p_1}|n-1$ we have $$p_1<p_2\le Q_{11}/p_1, \qquad p_2\equiv 1\mod\lambda_{p_1}.$$ Since $\lambda_{p_1}$ is close to $p_1-1$, there are about $Q_{11}/(p_1)^2$ candidates for each $p_1$. When $p_1$ is small, there are too many. According the value of $p_1$, we divide into three parts.

\subsection{small and large $p_1$}
If $p_1<10^6$, we'll use the same method as for $t=3$, $p_1p_2<2\cdot10^6$. We have $$a^{p_1-1}\equiv a^{n-1}\equiv1\mod p_2,\qquad a=2,3$$ so we calculate gcd$(2^{p_1-1}-1,3^{p_1-1}-1)$ and factor it to get prime divisors $p_2$ with $$p_1<p_2\le Q_{11}/p_1.$$ Our algorithm takes about 9 hours and finds no spsp($v$).

For $p_1>10^8$, There are less than 380 candidates, we just run our algorithm as described at the beginning of this section. It takes about 18 hours and find no spsp($v$).

\subsection{$10^6<p_1<10^8$}
When $p_1$ is in this interval, we divide into three parts according to $p_1\equiv 3\mod4$, $p_1\equiv5\mod 8$ and $p_1\equiv 1\mod 8$. In each case, just as $t=3$ we use Chinese Remainder Theorem to reduce candidates. This time we use the first 6 primes.

For $p_1\equiv 3\mod 4$, $p_2$ with $\sigma_{p_2}^v=\sigma_{p_1}^v$. If $p_2\equiv 3\mod4$ then we have $p_2\equiv1 \mod \lambda_{p_1}$ and $$(\frac{a}{p_1})=(\frac{a}{p_2}),\qquad a=2,3,5,7,11,13.$$ If $p_2\equiv1\mod 4$, then we have $p_2\equiv 1\mod \lambda_{p_1}$ and
$$(\frac{a}{p_2})=1,\qquad a=2,3,5,7,11,13.$$ Our algorithm takes about 15 hours and finds no spsp($v$).

For $p_1\equiv 5\mod 8$, then $p_2\equiv 1\mod 4$. If $p_2\equiv 5\mod 8$ then we have $p_2\equiv 1\mod \lambda_{p_1}$ and
$$(\frac{a}{p_1})=(\frac{a}{p_2}),\qquad a=2,3,5,7,11,13.$$ If $p_2\equiv 1\mod 8$, then we have $p_2\equiv 1\mod \lambda_{p_1}$ and
$$(\frac{a}{p_2})=1,\qquad a=2,3,5,7,11,13.$$ Our algorithm takes about 15 hours and finds no spsp($v$).

For $p_1\equiv1 \mod 8$, denote $e=Val(p_1-1)$, $f=Val(\sigma_{p_1})$, then $f\le e$. If $f=e$, there are two cases. For $p_2\equiv 1+2^e\mod 2^{e+1}$, then
$p_2\equiv 1\mod \lambda_{p_1}$ and $$(\frac{a}{p_1})=(\frac{a}{p_2}),\qquad a=2,3,5,7,11,13.$$ For $p_2\equiv1\mod 2^{e+1}$, then $p_2\equiv 1\mod\lambda_{p_1}$ and
$$(\frac{a}{p_2})=1,\qquad a=2,3,5,7,11,13.$$
If $f<e$, we only use $p_2\equiv 1\mod \lambda_{p_1}$, Our algorithm takes about 16 hours and finds no spsp($v$).

We also run an algorithm for these cases without use Chinese Remainder Theorem, it took more than 10 days and didn't halt. So the Chinese Remainder Theorem is really helpful here. We need to be careful when writing our algorithm because gcd$(a,\lambda_{p_1})\ne 1$ for some $p_1$ and $a=2,3,5,7,11,13.$

Then we finish the $t=2$ case and find no strong pseudoprime to the first 9 primes.

\section{Conclusion}
Until now, we have checked all the odd composite numbers up to $Q_{11}$, and find only one strong pseudoprime $Q_{11}$ to the first 9 primes. As it is easy to check that $Q_{11}$ is also strong pseudoprime to the bases 29 and 31, we have our claim in \S 1. $$\psi_{9}=\psi_{10}=\psi_{11}=Q_{11}$$ So for an integer less than $Q_{11}$, only 9 strong pseudoprime tests are needed to judge its primality and compositeness. We use the software Magma and all algorithms are run in my PC(an Intel(R) Core(TM)2 Duo CPU E7500 @ 2.93GHz with 2Gb of RAM). The total time is about 105 hours.

\end{document}